\numberwithin{equation}{section}
\DeclareMathOperator{\gr}{gr}
\newtheorem{lemma}[subsection]{Lemma}
\newtheorem{teo}[subsection]{Theorem}
\theoremstyle{definition}
\newtheorem{de}[subsection]{Definition}
\theoremstyle{remark}
\begin{document}

\title[Naturally graded p-filiform associative algebras]
{Naturally graded p-filiform associative algebras}

	\author{I.A.Karimjanov}
\address{Department of Mathematics, Andijan State University, 129, Universitet Street, Andijan,
170100, Uzbekistan,
\newline and\newline
V.I.Romanovskiy Institute of Mathematics, Uzbekistan Academy of Sciences, Univesity Street, 9, Olmazor district, Tashkent, 100174, Uzbekistan,
\newline
		and\newline
		Saint-Petersburg State University, Saint-Petersburg, Russia.}

\email{iqboli@gmail.com}

\thanks{This work was partially supported by RSF 22-71-10001.}

\begin{abstract} In the paper we describe $n$-dimensional naturally graded nilpotent associative algebras with the characteristic sequence $C(\mathcal{A})=(n-p,1,\dots,1)$ as called $p-$filiform algebras over the field of the complex numbers.
%In this paper we classify filiform associative algebras of degree $k$ over a field of characteristic zero. Moreover, we also classify naturally graded complex filiform and quasi-filiform nilpotent associative algebras, described by means of the characteristic sequence $C(\mathcal{A})=(n-2,1,1)$ or $C(\mathcal{A})=(n-2,2)$.
\end{abstract}

\subjclass[2020]{16S50, 16W50}
\keywords{associative algebras, nilpotent,  naturally graded, p-filiform,   characteristic sequence,  left multiplication operator}

\maketitle

\section{Introduction}

Associative algebras appeared in the several fields of mathematics, for instance: representation theory, analysis, geometry and combinatorics, mathematical physics and others. The first studies of associative algebras as an algebraic object were started by B.Pierce\cite{Pr}. Later Wedderburn \cite{{wed}} proved that any finite-dimensional associative algebra over a perfect field can be expressed as a semidirect sum of a semisimple subalgebra and its nilpotent radical. It is already well known that the semisimple part can be described from simple ideal and any simple associative algebra isomorphic to $M_n(D)$, the algebra of $n\times n$ matrices with entries from a division algebra $D$. Thus, the study of finite dimensional associative algebras was reduced to the nilpotent ones.

The classification of low dimensional nilpotent associative algebras were done by several authors. Latterly, nilpotent associative algebras of dimension less or equal 4 classified by using the method of central extentions by De Graaf\cite{Graaf}. Recently, in \cite{kar} classified 5-dimensional complex nilpotent associative algebras except two-step nilpotent algebras.

There are a lot of papers devoted to classification of finite-dimensional nilpotent associative algebras with different properties. Actually, $n$-dimensional associative algebras with a maximum nilpotency index as named null-filiform, with nilindex $n$ as named filiform and $n-1$ as named quasi-filiform algebras have been investigated \cite{kar2,kar3}. We remark that $n$-dimensional nilpotent associative algebra has a nilpotency index less or equal $n+1$. Later, the classification of naturally graded nilpotent associative algebras with the characteristic sequence $(n-m,m)$ has been done \cite{kar1}.

The purpose of this paper is to study naturally graded $n$-dimensional associative algebras with characteristic sequence $(n-p,1,\dots,1)$ as named $p$-filiform. Similar results for naturally graded nilpotent Lie, Leibniz and Zinbiel algebras were obtained in the works \cite{Lie, Leib, Zin}.

Throughout the paper, all spaces and algebras are considered over the field of complex numbers. For convenience, we omit the products which are equal to zero.

\section{Preliminaries}\label{S:prel}
For an algebra $\mathcal{A}$ of an arbitrary variety, we consider the series
\[
\mathcal{A}^1=\mathcal{A}, \qquad \ \mathcal{A}^{i+1}=\sum\limits_{k=1}^{i}\mathcal{A}^k \mathcal{A}^{i+1-k}, \qquad i\geq 1.
\]

We say that  an  algebra $\mathcal{A}$ is \emph{nilpotent} if $\mathcal{A}^{i}=0$ for some $i \in \mathbb{N}$. The integer $k$ satisfying $\mathcal{A}^{k}\neq0$ and $\mathcal{A}^{k+1}=0$ is called the \emph{index of nilpotency} or \emph{nilindex} of $\mathcal{A}$.

\begin{de} Given a nilpotent associative algebra $\mathcal{A}$, put
$\mathcal{A}_i=\mathcal{A}^i/\mathcal{A}^{i+1}, \ 1 \leq i\leq k-1$, and $\gr \mathcal{A} = \mathcal{A}_1 \oplus
\mathcal{A}_2\oplus\dots \oplus \mathcal{A}_{k}$. Then $\mathcal{A}_i\mathcal{A}_j\subseteq \mathcal{A}_{i+j}$ and we
obtain the graded algebra $\gr \mathcal{A}$. If $\gr \mathcal{A}$ and $\mathcal{A}$ are isomorphic,
denoted by $\gr \mathcal{A}\cong \mathcal{A}$, we say that the algebra $\mathcal{A}$ is naturally
graded.
\end{de}

For any element $x$ of $\mathcal{A}$ we define the  left multiplication operator as
\[L_x \colon  \mathcal{A} \rightarrow \mathcal{A},  \quad z \mapsto xz, \ z\in\mathcal{A}.\]

Let us $x\in\mathcal{A}\setminus\mathcal{A}^2$ and for the left multiplication operator $L_x$, define the decreasing sequence $C(x)=(n_1,n_2, \dots, n_k)$ that consists of the dimensions of the Jordan blocks of the operator $L_x$. Endow the set of these sequences with the lexicographic order, i.e. $C(x)=(n_1,n_2, \dots, n_k)\leq C(y)=(m_1,m_2, \dots, m_s)$ means that there is an $i\in\mathbb{N}$ such that $n_j=m_j$ for all $j<i$ and $n_i<m_i$.

\begin{de} The sequence $C(\mathcal{A})=\max_{x\in\mathcal{A}\setminus\mathcal{A}^2}C(x)$ is defined to be the characteristic sequence of the algebra $\mathcal{A}$.
\end{de}

We remark that $n-$dimensional nilpotent associative algebra has a nilpotency index less or equal $n$. A nilpotent associative algebras which have a maximum nilindex are one-generated and called null-filiform algebras.

\begin{de}
An $n$-dimensional algebra $\mathcal{A}$ is called null-filiform if $\dim \mathcal{A}^i=(n+ 1)-i,\ 1\leq i\leq n+1$.
\end{de}

There is unique such type associative algebra up to isomorphism for fixed dimension: An arbitrary $n$-dimensional null-filiform associative algebra is isomorphic to the algebra:
\[e_i e_j= e_{i+j}, \quad 2\leq i+j\leq n,\]
where $\{e_1, e_2, \dots, e_n\}$ is a basis of the algebra $\mathcal{A}$.

A nilpotent associative algebras with nilindex $n-1$ which called filiform algebras have been classified in \cite{kar3}.

\begin{de}
An $n$-dimensional algebra is called filiform if $\dim(\mathcal{A}^i)=n-i, \ 2\leq i \leq n$.
\end{de}

\begin{teo}\cite{kar3}
For $n>3$, every $n$-dimensional filiform associative algebra over an algebraically closed field of characteristic zero is isomorphic to one of the following pairwise non-isomorphic algebras with a basis $\{e_1,e_2,\dots,e_n\}$:
\[\begin{array}{llll}
\mu_{1,1}^n: & e_ie_j=e_{i+j},  & & \\
\mu_{1,2}^n: & e_ie_j=e_{i+j},  & e_ne_n=e_{n-1}, & \\
\mu_{1,3}^n: & e_ie_j=e_{i+j},  & e_1e_n=e_{n-1}, & \\
\mu_{1,4}^n: & e_ie_j=e_{i+j},  & e_1e_n=e_{n-1}, & e_ne_n=e_{n-1}
\end{array}\]
where $2\leq i+j\leq n-1$.
\end{teo}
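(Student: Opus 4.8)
The plan is to reduce an arbitrary $n$-dimensional filiform associative algebra $\mathcal A$ to a rigid ``power basis'' and then read the classification off from associativity and a few rescalings. First I would record what filiformness gives: $\dim(\mathcal A/\mathcal A^2)=2$, so $\mathcal A$ is two-generated; $\dim(\mathcal A^i/\mathcal A^{i+1})=1$ for $2\le i\le n-1$; and $\mathcal A^n=0$, so the nilindex is $n-1$ and — since $L_x^k=L_{x^k}$ and $x^{n-1}\mathcal A\subseteq\mathcal A^n=0$ — every $L_x$ has all Jordan blocks of size $\le n-1$. The whole argument hinges on producing a generator $e_1\notin\mathcal A^2$ with $e_1^2\notin\mathcal A^3$, so that the powers $e_1,e_1^2,\dots,e_1^{n-1}$ descend the filtration one step at a time; this is where the hypothesis $n>3$ enters and is, I expect, the main obstacle.

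To get such an $e_1$, I would argue by contradiction. If $x^2\in\mathcal A^3$ for all $x\in\mathcal A$, then taking two generators $e_1,w$ and expanding $(e_1+tw)^2$ shows $\overline{e_1^2}=\overline{w^2}=0$ and $\overline{we_1}=-\overline{e_1w}$ in $\mathcal A^2/\mathcal A^3$ (bars denoting images in the relevant $\mathcal A^i/\mathcal A^{i+1}$, which is one-dimensional and spanned by images of products of $e_1,w$); hence $\mathcal A^2=\langle e_1w\rangle+\mathcal A^3$, and then, moving an $e_1$ or a $w$ past $e_1w$ by associativity and using $e_1^2,w^2\in\mathcal A^3$, one checks $\mathcal A^3=\mathcal A^2\mathcal A+\mathcal A\mathcal A^2\subseteq\mathcal A^4$ — impossible since $\dim\mathcal A^3=n-3>n-4=\dim\mathcal A^4$. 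Once $e_1$ is fixed, the same device shows $e_1^m\notin\mathcal A^{m+1}$ for every $1\le m\le n-1$: if $m$ were the least bad index then $m\ge3$, and writing $\mathcal A^k=\langle e_1^k\rangle+\mathcal A^{k+1}$ for $k<m$, every summand of $\mathcal A^m=\sum_{k=1}^{m-1}\mathcal A^k\mathcal A^{m-k}$ lands in $\langle e_1^m,\ e_ne_1^{m-1},\ e_1^{m-1}e_n\rangle+\mathcal A^{m+1}$; since $e_1^m\in\mathcal A^{m+1}$ and $e_ne_1,e_1e_n\in\mathcal A^2=\langle e_1^2\rangle+\mathcal A^3$ one also gets $e_ne_1^{m-1},e_1^{m-1}e_n\in\mathcal A^{m+1}$, so $\mathcal A^m\subseteq\mathcal A^{m+1}$, again a contradiction. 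Therefore $C(\mathcal A)=(n-1,1)$, $e_1^n\in\mathcal A^n=0$, and with $e_i:=e_1^i$ $(2\le i\le n-1)$ and any $e_n\in\mathcal A\setminus(\mathcal A^2+\langle e_1\rangle)$ we have a basis of $\mathcal A$.

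Next I would pin down the multiplication. Associativity gives $e_ie_j=e_1^{i+j}=e_{i+j}$ for $2\le i+j\le n-1$, $e_ie_j=0$ for $i+j\ge n$ (with $2\le i,j\le n-1$), and $e_ie_n=e_1^{i-1}(e_1e_n)$, $e_ne_i=(e_ne_1)e_1^{i-1}$; so only $e_1e_n,e_ne_1,e_ne_n\in\mathcal A^2$ are free. From $(e_1e_n)e_1=e_1(e_ne_1)$ the $e_2,\dots,e_{n-2}$-components of $e_1e_n$ and $e_ne_1$ coincide, so replacing $e_n$ by $e_n-\sum_k\alpha_ke_{k-1}$ (where $e_ne_1=\sum_k\alpha_ke_k$) we may assume $e_ne_1=0$; then $e_1e_n=\beta e_{n-1}$, $e_ie_n=e_ne_i=0$ for $i\ge2$, and $e_1(e_ne_n)=(e_1e_n)e_n=\beta e_{n-1}e_n=0$ forces $e_ne_n=\gamma e_{n-1}$. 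The remaining associativity identities then hold automatically, so $\mathcal A$ is determined by the pair $(\beta,\gamma)$.

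Finally, $e_1\mapsto\lambda e_1$ (hence $e_i\mapsto\lambda^ie_i$) and $e_n\mapsto\mu e_n$ act by $(\beta,\gamma)\mapsto(\mu\lambda^{2-n}\beta,\ \mu^2\lambda^{1-n}\gamma)$; since the field is algebraically closed and $n-3\ge1$, each nonzero coordinate can be scaled to $1$, leaving only $(0,0),(1,0),(0,1),(1,1)$, i.e. $\mu_{1,1}^n,\mu_{1,3}^n,\mu_{1,2}^n,\mu_{1,4}^n$. Pairwise non-isomorphism follows from cheap invariants: $\mu_{1,1}^n$ and $\mu_{1,2}^n$ are commutative while $\mu_{1,3}^n$ and $\mu_{1,4}^n$ are not, and within each pair the dimension of the two-sided (resp.\ left) annihilator separates the two members. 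The technical heart of the argument is the second paragraph — forcing a generator whose powers never leave the ``staircase'' of the filtration; once that is done, the algebra is rigid and the rest is bookkeeping.
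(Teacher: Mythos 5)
The paper does not prove this statement: it is imported verbatim from \cite{kar3} as background, so there is no in-paper proof to compare against. Judged on its own, your argument is correct and self-contained, and it follows the same general strategy that the paper itself uses later for the $p$-filiform case (build an adapted basis from powers of a generator, kill structure constants with associativity, then normalize by rescaling). The two genuinely delicate points are handled properly: the existence of a generator $e_1$ with $e_1^2\notin\mathcal A^3$ (your polarization argument $(e_1+tw)^2$ plus the contradiction $\mathcal A^3\subseteq\mathcal A^4$ is where $n>3$ enters), and the induction showing $e_1^m\notin\mathcal A^{m+1}$ for $m\le n-1$, where you correctly account for the extra generator $e_n$ in the $k=1$ and $k=m-1$ summands of $\mathcal A^m=\sum_k\mathcal A^k\mathcal A^{m-k}$ and dispose of $e_ne_1^{m-1}$, $e_1^{m-1}e_n$ using $e_ne_1,e_1e_n\in\mathcal A^2$. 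The normalization $(\beta,\gamma)\mapsto(\mu\lambda^{2-n}\beta,\mu^2\lambda^{1-n}\gamma)$ is right, the solvability of $\lambda^{n-3}=\beta^2/\gamma$ again uses $n>3$ and algebraic closedness, and your invariants (commutativity, annihilator dimensions) do separate the four algebras. Two small things you should make explicit if you write this up: verify that the substitution $e_n\mapsto e_n-\sum_k\alpha_ke_{k-1}$ keeps $e_n$ outside $\mathcal A^2+\langle e_1\rangle$ (it does, since the correction lies in $\langle e_1\rangle+\mathcal A^2$), and actually run through the finitely many associativity identities involving $e_n$ rather than asserting they ``hold automatically'' --- they do all reduce to products landing in $\mathcal A^n=0$, but the claim deserves a line of justification.
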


The classification of naturally graded nilpotent non-split associative algebras with nilindex $n-2$ as named quasi-filiform were done in \cite{kar3}.

\begin{teo}\cite{kar3} Let $\mathcal{A}$ be $n$-dimensional $(n>5)$ naturally graded non-split quasi-filiform associative algebra over an algebraically closed field $\mathds{F}$ of characteristic zero. Then it is isomorphic to one of the following pairwise non-isomorphic algebras with a basis $\{e_1,e_2,\dots,e_n\}$:
\[\mu_{2,1}^n:\left\{\begin{array}{l}
  e_ie_j=e_{i+j},  \\
  e_{n-1}e_1=e_n,   \\
  \end{array}\right. \quad \mu_{2,2}^n(\alpha):\left\{\begin{array}{l}
e_ie_j=e_{i+j}, \\
e_1e_{n-1}=e_n,\\
e_{n-1}e_1=\alpha e_n, \\
\end{array}\right. \]
\[\mu_{2,3}^n: \left\{\begin{array}{ll}
e_ie_j=e_{i+j}, \\
e_1e_{n-1}=e_n,\\
e_{n-1}e_1=e_n,\\
e_{n-1}e_{n-1}=e_n, \\
\end{array}\right.
\quad
 \mu_{2,4}^n:\left\{\begin{array}{l}
e_ie_j=e_{i+j}, \\
e_1e_{n-1}=e_n,\\
e_{n-1}e_{n-1}=e_n,\\
\end{array}\right.
\quad
\]
where $\alpha\in\mathds{F}$ and  \, $2\leq i+j\leq n-2$.
\end{teo}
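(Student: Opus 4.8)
The plan is to pass at once to the associated graded algebra: since $\gr\mathcal A\cong\mathcal A$ we may assume $\mathcal A=\mathcal A_1\oplus\cdots\oplus\mathcal A_{n-2}$ with $\mathcal A_i\mathcal A_j\subseteq\mathcal A_{i+j}$, with $\mathcal A$ generated by $\mathcal A_1$, and with every $\mathcal A_i\neq0$; as $\sum_i\dim\mathcal A_i=n=(n-2)+2$, exactly two ``extra'' units of dimension are distributed over the $n-2$ pieces. The first step is to show that for a \emph{non-split} $\mathcal A$ these sit low, i.e.\ $\dim\mathcal A_1=\dim\mathcal A_2=2$ and $\dim\mathcal A_i=1$ for $3\le i\le n-2$. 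Indeed, $\dim\mathcal A_1=1$ is impossible, since then $\mathcal A$ is one-generated and every $\mathcal A_i$ is at most one-dimensional, forcing $\dim\mathcal A\le n-2$. And if $\dim\mathcal A_2=1$ (this covers the case $\dim\mathcal A_1=3$ as well), the multiplication $\mathcal A_1\times\mathcal A_1\to\mathcal A_2$ is a bilinear form $B$ with values in a line; it cannot be alternating, since a short computation would then give $\mathcal A_3=0$, contradicting nilindex $n-2\ge3$; so $B(x,x)\neq0$ for some $x$, whence $x^2$ spans $\mathcal A_2$ and $x^3$ spans $\mathcal A_3\neq0$, and feeding associativity into $x(xy)=(xx)y$ (visible because $x^3\neq0$) forces $B$ to be symmetric of rank one. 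Its radical then consists of elements of $\mathcal A_1$ that annihilate all of $\mathcal A$ (by an easy induction up the grading), so it is a trivial ideal with ideal complement $\Co x\oplus\mathcal A^2$, and the latter is one-generated, hence null-filiform with all graded pieces one-dimensional; thus $\mathcal A$ is split. By the dimension count a non-split $\mathcal A$ therefore has $\dim\mathcal A_1=\dim\mathcal A_2=2$ and $\dim\mathcal A_i=1$ for $i\ge3$.

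Next I would choose adapted coordinates. Using that the one-dimensional tail $\mathcal A_3\supset\mathcal A_4\supset\cdots$ must be produced by iterated multiplication, one shows (after a linear change inside $\mathcal A_1$) that there is a generator $e_1$ with $e_1^i\neq0$ for $2\le i\le n-2$; put $e_i:=e_1^i$, pick a second generator $e_{n-1}\in\mathcal A_1$, and pick $e_n\in\mathcal A_2$ spanning $\mathcal A_2$ together with $e_2$. Then $\mathcal A_i=\langle e_i\rangle$ for $3\le i\le n-2$, the chain relations $e_ie_j=e_{i+j}$ $(2\le i+j\le n-2)$ hold, and the whole multiplication is recovered by associativity from finitely many structure constants in degrees $\le3$ — above all the three degree-two products $e_1e_{n-1},\,e_{n-1}e_1,\,e_{n-1}^2\in\mathcal A_2$. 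Imposing associativity and exploiting the remaining graded basis changes ($e_1\mapsto\kappa e_1$, which forces $e_i\mapsto\kappa^i e_i$; $e_{n-1}\mapsto\lambda e_{n-1}+\rho e_1$; rescaling $e_n$), one kills the $e_2$-components and, after a further change of basis, all off-chain products, reaching $e_1e_{n-1}=ae_n$, $e_{n-1}e_1=be_n$, $e_{n-1}^2=ce_n$ with $(a,b,c)\neq(0,0,0)$ (otherwise $e_{n-1},e_n$ span a direct summand). The induced action of $(\kappa,\rho,\lambda)$ on the triple is explicit — for $a=1$ it is $(1,b,c)\mapsto\bigl(1,\tfrac{\kappa b+\rho c}{\kappa+\rho c},\tfrac{\lambda c}{\kappa+\rho c}\bigr)$ — and working out its orbits gives exactly the normal forms $(0,1,0)$, $(1,\alpha,0)$, $(1,1,1)$, $(1,0,1)$, i.e.\ the algebras $\mu_{2,1}^n$, $\mu_{2,2}^n(\alpha)$, $\mu_{2,3}^n$, $\mu_{2,4}^n$.

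Finally I would verify pairwise non-isomorphism via invariants insensitive to the admissible basis changes: whether the chain generator can be chosen so that $e_1e_{n-1}\neq0$ (this fails only for $\mu_{2,1}^n$); whether $e_{n-1}^2$ lies in $\langle e_2\rangle$ modulo higher pieces (the invariant ``$c=0$'', separating $\mu_{2,2}^n(\alpha)$ from $\mu_{2,3}^n$ and $\mu_{2,4}^n$); and, when $c\neq0$, the residual $\{0,1\}$-valued invariant left by the Möbius action above (separating $\mu_{2,3}^n$ from $\mu_{2,4}^n$). That $\alpha$ is itself an invariant follows because any isomorphism respects the canonical filtration, hence carries a chain generator to a chain generator and the second generator to itself modulo $\Co e_1+\mathcal A^2$, after which a direct computation shows $b=\alpha$ is unchanged. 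I expect the main obstacle to be the interplay of the first two steps: setting up the rank-one argument that eliminates the split and near-split graded shapes, proving that the off-chain products genuinely vanish after normalization, and pinning down the orbit structure precisely enough to see that, for instance, $(1,b,1)$ collapses to just $\mu_{2,3}^n$ and $\mu_{2,4}^n$. The hypothesis $n>5$ (equivalently nilindex $\ge4$) enters exactly here: it guarantees that $\mathcal A_3$ and $\mathcal A_4$ are nonzero, which is what makes both the associativity identity in the first step and the vanishing of the off-chain products available; for $n\le5$ the classification acquires additional members.
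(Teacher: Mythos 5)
First, a point of reference: this theorem appears in the Preliminaries of the paper only as a quoted result with a citation to \cite{kar3}; the present paper contains no proof of it, so there is nothing in-paper to compare your argument against line by line. Judged against the standard method of \cite{kar3} (which the body of this paper also follows for the $p$-filiform case), your strategy is the expected one: fix the dimensions of the graded pieces, build an adapted basis with a long chain $e_i=e_1^i$, reduce everything by associativity to the three degree-two products $e_1e_{n-1}$, $e_{n-1}e_1$, $e_{n-1}^2$, and compute orbits under the admissible graded basis changes. Your rank-one bilinear-form argument forcing $\dim\mathcal{A}_1=\dim\mathcal{A}_2=2$ in the non-split case is correct and cleanly packaged, and you correctly locate where $n>5$ enters.

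That said, the proposal establishes the shape of the classification rather than the classification, because the two decisive computations are asserted. (i) The existence of $e_1\in\mathcal{A}_1$ with $e_1^{\,n-2}\neq0$ is not free here: quasi-filiform is defined by the nilindex (the dimensions of the $\mathcal{A}^i$), not by the characteristic sequence, so you do not get a Jordan block of $L_{e_1}$ by hypothesis, and the naive polarization argument fails because the product $x_1x_2\cdots x_{n-2}$ is not symmetric; one must actually produce the chain generator from $\dim\mathcal{A}_i=1$ $(i\geq3)$ and associativity. (ii) The orbit analysis is displayed only for $a=1$; the triples with $a=0$ other than $(0,1,0)$ --- for instance $(0,1,1)$ and $(0,0,1)$ --- must be shown to be split or to be carried into $\mu_{2,3}^n$, $\mu_{2,4}^n$ by a substitution of the form $e_1\mapsto e_1+u\,e_{n-1}$ (which changes $a$ from $0$ to a nonzero value), and this is precisely where the list could silently gain or lose members. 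Relatedly, the vanishing of the off-chain products $e_1e_n$, $e_ne_1$, $e_ne_{n-1}$, $e_ne_n$ (which uses $\mathcal{A}_4\neq0$) needs to be recorded, since it is what makes the triple $(a,b,c)$ a complete set of structure constants and hence makes the orbit computation conclusive. None of these is a wrong turn, but each is a genuine missing step rather than a routine omission.
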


Now we define filiform algebras of degree $p$.

\begin{de}
An $n$-dimensional associative algebra $\mathcal{A}$ is called filiform of degree $p$ if $\dim(\mathcal{A}^i)=n-p+1-i, \ 1\leq i \leq n-p+1$.
\end{de}

%The following theorem generalizes the previous proposition obtained in \cite{MO}.

\begin{teo}\cite{kar3}.
 Let $\mathcal{A}$ be a naturally graded filiform associative algebra of  dimension $n (n>p+2)$ of degree $p$  over a field $\mathbb{F}$ characteristic zero. Then, $\mathcal{A}$ isomorphic to $\mu_0^{n-p}\oplus\mathbb{F}^p$.
 \end{teo}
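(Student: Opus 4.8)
Since $\mathcal{A}$ is naturally graded we may work throughout in $\mathcal{A}=\mathcal{A}_1\oplus\mathcal{A}_2\oplus\cdots\oplus\mathcal{A}_{n-p}$, and the plan runs as follows. First I would record the graded dimensions forced by the definition of ``filiform of degree $p$'': $\dim\mathcal{A}_1=p+1$, $\dim\mathcal{A}_i=1$ for $2\le i\le n-p$, the nilindex equals $n-p$, and (using $n>p+2$) $\mathcal{A}_3\neq0$. Next I would encode the multiplication in low degrees: fixing nonzero $e_2\in\mathcal{A}_2$ and $e_3\in\mathcal{A}_3$, the product $\mathcal{A}_1\times\mathcal{A}_1\to\mathcal{A}_2$ is a bilinear form $B$ on $\mathcal{A}_1$ via $ab=B(a,b)e_2$, and the products $\mathcal{A}_2\times\mathcal{A}_1\to\mathcal{A}_3$, $\mathcal{A}_1\times\mathcal{A}_2\to\mathcal{A}_3$ are linear functionals $\lambda,\rho$ on $\mathcal{A}_1$ via $e_2a=\lambda(a)e_3$ and $ae_2=\rho(a)e_3$. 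Evaluating associativity $(ab)c=a(bc)$ on $a,b,c\in\mathcal{A}_1$ then produces the single identity
\begin{equation}
B(a,b)\,\lambda(c)=B(b,c)\,\rho(a)\qquad\text{for all }a,b,c\in\mathcal{A}_1,\tag{$\star$}
\end{equation}
on which the whole argument will run.

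The technical heart, and the step I expect to be the main obstacle, is to deduce from $(\star)$ that $B$ has rank exactly $1$ and that $B$, $\lambda$, $\rho$ all vanish on one common hyperplane of $\mathcal{A}_1$. I would argue: first $B\neq0$, for otherwise $\mathcal{A}_1\mathcal{A}_1=0$ and an easy induction gives $\mathcal{A}_i=0$ for $i\ge2$, contradicting $\dim\mathcal{A}_2=1$; second $\lambda,\rho$ are not both zero, for otherwise $\mathcal{A}_3=\mathcal{A}_1\mathcal{A}_2+\mathcal{A}_2\mathcal{A}_1=0$, contradicting $n>p+2$. Assuming $\rho\neq0$ and normalizing $\rho(a_0)=1$, putting $a=a_0$ in $(\star)$ gives $B(b,c)=B(a_0,b)\,\lambda(c)$, so with $g:=B(a_0,\cdot)$ we get $B(b,c)=g(b)\lambda(c)$ with $g\neq0$, $\lambda\neq0$, i.e. $B$ has rank $1$. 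Substituting back into $(\star)$ and cancelling a nonzero value of $\lambda$ yields $g(a)\lambda(b)=g(b)\rho(a)$; taking $a=a_0$ forces $g$ to be a nonzero multiple of $\lambda$, and then choosing $b$ with $\lambda(b)\neq0$ forces $\rho=\lambda$. (If instead only $\lambda\neq0$, the mirror computation applied to $(\star)$ again forces $\rho\neq0$, returning to the case just treated.) Hence $g,\lambda,\rho$ are all nonzero scalar multiples of one functional $\varphi$ on $\mathcal{A}_1$, and $R:=\ker\varphi$ has dimension $p$.

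The final step assembles the isomorphism. After rescaling $e_2$ we may take $B(a,b)=\varphi(a)\varphi(b)$; choosing $e_1\in\mathcal{A}_1$ with $\varphi(e_1)=1$ gives $\mathcal{A}_1=\mathbb{F}e_1\oplus R$ and $e_1e_1=e_2$. Using the identities $\mathcal{A}_i=\mathcal{A}_1\mathcal{A}_{i-1}=\mathcal{A}_{i-1}\mathcal{A}_1$ for $i\ge2$ (which follow from $\mathcal{A}^i=\sum_k\mathcal{A}^k\mathcal{A}^{i-k}$ and associativity) together with $\dim\mathcal{A}_i=1$, a short induction shows that $e_i:=e_1^{i}$ is a nonzero basis vector of $\mathcal{A}_i$ for $2\le i\le n-p$, and associativity gives $e_ie_j=e_{i+j}$ when $2\le i+j\le n-p$ and $e_ie_j=0$ otherwise. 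On the other hand, for $a\in R$ one has $a\mathcal{A}_1=\mathcal{A}_1a=0$ because $B(a,\cdot)=B(\cdot,a)=0$; pushing this through $\mathcal{A}_i=\mathcal{A}_1\mathcal{A}_{i-1}=\mathcal{A}_{i-1}\mathcal{A}_1$ and associativity upgrades it to $a\mathcal{A}=\mathcal{A}a=0$, so $R$ lies in the annihilator of $\mathcal{A}$. Therefore, in a basis of $\mathcal{A}$ obtained by adjoining a basis of $R$ to $\{e_1,\dots,e_{n-p}\}$, the multiplication is exactly $e_ie_j=e_{i+j}$ with every other product zero, i.e. $\mathcal{A}\cong\mu_0^{n-p}\oplus\mathbb{F}^p$; the degenerate case $p=0$ (with $R=0$) is just the null-filiform algebra and is contained in this description. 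Only the rank-one analysis of $B$ is delicate; the rest is bookkeeping with the grading and associativity.
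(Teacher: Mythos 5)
The paper does not prove this statement: it is imported verbatim from \cite{kar3} as a known classification result, so there is no in-paper argument to compare against. Your proposal is, as far as I can check, a correct and self-contained proof. The graded dimensions $\dim\mathcal{A}_1=p+1$, $\dim\mathcal{A}_i=1$ ($2\le i\le n-p$) are right (reading the definition of filiform of degree $p$ for $i\ge 2$, as one must), the identity $B(a,b)\lambda(c)=B(b,c)\rho(a)$ is the correct translation of associativity on $\mathcal{A}_1^{\times 3}$, and the rank-one analysis goes through: $B\neq0$ forces $\rho\neq0$ (if $\rho=0$ then choosing $c$ with $\lambda(c)\neq0$ kills $B$), whence $B(b,c)=g(b)\lambda(c)$, $g$ a nonzero multiple of $\lambda$, and $\rho=\lambda$. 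The induction $\mathcal{A}_i=\mathcal{A}_1\mathcal{A}_{i-1}=\langle e_1^i\rangle$ together with $ae_{i-1}=(ae_1)e_{i-2}=0$ for $a\in\ker\varphi$ correctly places $\ker\varphi$ in the two-sided annihilator, giving $\mathcal{A}\cong\mu_0^{n-p}\oplus\mathbb{F}^p$. Your route differs in flavour from the machinery this paper develops for the general $p$-filiform case (which works with an adapted basis, the Jordan form of $L_{e_1}$, and rank estimates on perturbed left-multiplication operators $L_{e_1+Af_i}$); your bilinear-form argument only uses the graded dimensions and associativity, never the characteristic sequence, which makes it both more elementary and directly applicable to the "filiform of degree $p$" definition as stated. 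One cosmetic caveat: the rescaling making $B(a,b)=\varphi(a)\varphi(b)$ is achieved by rescaling $e_2$ (and then $e_3,\dots$ propagate via $e_i=e_1^i$), which works over any field, as you implicitly do; rescaling $\varphi$ instead would need a square root.
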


\begin{de} An associative algebra $\mathcal{A}$ is called $p$-filiform if $C(\mathcal{A})=(n-p,1,\dots,1)$, where $p\geq0$.
\end{de}

Notice that when $p = 0$, then it coincides with null-filiform; if $p = 1$ coincides with filiform and in the case $p = 2$, it is a particular case of quasi-filiform. Moreover filiform associative algebra of degree $p$ is also a particular case of $p$-filiform associative algebra.

\section{Main result} \label{S:fil}

Let $\mathcal{A}$  be a naturally graded $n$-dimensional $p$-filiform associative algebra. Then there exists
a basis $\{e_1,e_2,\dots, e_{n-p}, f_1,\dots, f_p\}$ such that $e_1 \in \mathcal{A}\backslash\mathcal{A}^2$ and $C(e_1)=(n-p,1,\dots,1)$.

By the definition of characteristic sequence the operator $L_{e_1}$ in the Jordan form has one block $J_{n-p}$ of size $n-p$ and $p$ blocks $J_1$ of size one.

The possible forms for the operator $L_{e_1}$ are the following:
\[\begin{pmatrix}
J_{n-p}&0&\dots&0\\
0&J_1&\dots&0\\
\vdots&\vdots&\ddots&\vdots\\
0&0&\dots&J_1
\end{pmatrix},\quad
\begin{pmatrix}
J_1&0&\dots&0\\
0&J_{n-p}&\dots&0\\
\vdots&\vdots&\ddots&\vdots\\
0&0&\dots&J_1
\end{pmatrix},\dots,
\begin{pmatrix}
J_1&0&\dots&0\\
0&J_1&\dots&0\\
\vdots&\vdots&\ddots&\vdots\\
0&0&\dots&J_{n-p}
\end{pmatrix}.\]

By a shift of the basic elements, it is easy to prove that all cases when the Jordan block $J_{n-p}$ is placed in a position from the first, are mutually isomorphic cases. Thus, there are two possibilities of Jordan form of the matrix $L_{e_1}$:
\[\begin{pmatrix}
J_{n-p}&0&\dots&0\\
0&J_1&\dots&0\\
\vdots&\vdots&\ddots&\vdots\\
0&0&\dots&J_1
\end{pmatrix},\quad
\begin{pmatrix}
J_1&0&\dots&0\\
0&J_{n-p}&\dots&0\\
\vdots&\vdots&\ddots&\vdots\\
0&0&\dots&J_1
\end{pmatrix}.\]

Let us suppose that the operator $L_{e_1}$ has the second form. Then there exists
a basis $\{e_1,e_2,\dots, e_{n-p}, f_1,\dots, f_p\}$  and we have the next multiplications:
\[\left\{\begin{array}{ll}
e_1e_1=0,\\
e_1e_i=e_{i+1}, & 2\leq i\leq n-p-1,\\
e_1e_{n-p}=f_1,\\
e_1f_j=0, & 1\leq j\leq p.
\end{array}\right. \]

From the chain of equalities:
\[f_1=e_1e_{n-p}=e_1(e_1e_{n-p-1})=(e_1e_1)e_{n-p-1}=0\]
we obtain contradiction.

Thus, we can reduce the study to the following form of the matrix $L_{e_1}$:
\[\begin{pmatrix}
J_{n-p}&0&\dots&0\\
0&J_1&\dots&0\\
\vdots&\vdots&\ddots&\vdots\\
0&0&\dots&J_1
\end{pmatrix}\]

Let $\mathcal{A}$ be a naturally graded p-filiform associative algebra. Then there exists
a basis $\{e_1,e_2,\dots, e_{n-p}, f_1,\dots, f_p\}$ (so-called adapted basis) such that

\[\left\{\begin{array}{ll}
e_1e_i=e_{i+1}, & 1\leq i\leq n-p-1,\\
e_1f_j=0, & 1\leq j\leq p.
\end{array}\right. \]
By induction and associativity low, from multiplications given above it is easy to obtain that
\[\left\{\begin{array}{ll}
e_ie_j=e_{i+j}, & 2\leq i+j\leq n-p,\\
e_kf_s=0, & 1\leq k\leq n-p, 1\leq s\leq p.
\end{array}\right. \]

From these products, we have
\[\langle e_i\rangle \subseteq \mathcal{A}_i, \quad 1\leq i\leq n-p.\]

However, we do not have information about the elements $\{f_1,f_2,f_3,\dots,f_p\}$.

Let us denote by $\{r_1,r_2,r_3,\dots,r_p\}$, the position of the basic elements \(\{f_1,f_2,f_3,\dots,f_p\}\),
respectively, in natural gradation, i.e., $f_i\in \mathcal{A}_{r_i}$  for $1 \leq i \leq p$. Without loss of
generality, one can suppose that $r_1\leq r_2 \leq r_3 \leq\dots \leq r_p\leq n-p$.  For $p$-filiform associative algebras, the following
theorem holds.

\begin{teo}Let $\mathcal{A}$ be a graded p-filiform associative algebra. Then $r_s \leq s$
for any $s\in\{1,2,\dots,p\}$.
\end{teo}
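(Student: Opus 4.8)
The plan is to argue by contradiction, exploiting the grading together with the fact that the subspaces $\mathcal{A}_i$ for $1\le i\le n-p$ are (at least) one-dimensional, spanned by $e_i$, and that $\mathcal{A}$ is generated by $\mathcal{A}_1$. First I would record the dimension bookkeeping: since $\mathcal{A}$ is $p$-filiform and naturally graded with $k=n-p$ the nilindex, we have $\dim\mathcal{A}_i=1$ for $i\ge 2$ except for the indices $r_1\le\dots\le r_p$, where an extra copy of $\mathbb{F}$ (the span of the relevant $f_j$) is adjoined; equivalently $\dim\mathcal{A}_1=1+\#\{s:r_s=1\}$ and $\dim\mathcal{A}_i=1+\#\{s:r_s=i\}$ for $2\le i\le n-p$, with $\dim\mathcal{A}_i=0$ for $i>n-p$. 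The total is $\sum_i\dim\mathcal{A}_i=(n-p)+p=n$, consistent.

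Next I would suppose, for contradiction, that $r_s\ge s+1$ for some $s$, and pick the smallest such $s$; so $r_1\le 1,\ r_2\le 2,\ \dots,\ r_{s-1}\le s-1$ but $r_s\ge s+1$ (hence $r_s,\dots,r_p\ge s+1$). I then count the dimensions piece by piece. On the one hand $\dim\mathcal{A}_1=\dim(\mathcal{A}/\mathcal{A}^2)$ equals the number of generators; on the other, for the algebra to reach nilindex $n-p$ with $\mathcal{A}_{n-p}\neq 0$, the graded pieces $\mathcal{A}_1,\dots,\mathcal{A}_{n-p}$ must all be nonzero, and a generator-counting / product-counting argument should show that the elements $f_1,\dots,f_{s-1}$ sitting in degrees $r_1,\dots,r_{s-1}\le s-1$ together with $e_1$ already force $\dim\mathcal{A}_1\ge$ (number of generators needed), while the hypothesis $r_s\ge s+1$ leaves degree $s$ with $\dim\mathcal{A}_s=1$ (only $e_s$). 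The crux is then a pigeonhole/generation argument: consider $\sum_{i=1}^{s}\dim\mathcal{A}_i$. Each $f_j$ with $j<s$ lives in one of the first $s-1$ degrees, so among $\mathcal{A}_1,\dots,\mathcal{A}_s$ we can have at most $s-1$ of the $f$'s; but if some $r_j$ ($j<s$) were $\ge s$, that would contradict $r_j\le j\le s-1$. The contradiction I am aiming for is that the number of $f_j$ with $r_j\le s$ is at most $s-1$, yet the indices $r_1\le\dots\le r_{s-1}$ are all $\le s-1<s$ and these are $s-1$ values — so the real tension comes from below, not at $s$ itself.

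Reconsidering, the cleaner route is: assume $r_s\ge s+1$ and look instead at the first $s$ indices $r_1\le\dots\le r_s$. By minimality of $s$ we have $r_j\le j$ for $j<s$, so $r_1,\dots,r_{s-1}$ occupy degrees in $\{1,\dots,s-1\}$; the function $j\mapsto r_j$ from $\{1,\dots,s-1\}$ into $\{1,\dots,s-1\}$ is nondecreasing. I would then show, using associativity and the adapted-basis relations $e_1f_j=0$ and $e_ke_l=e_{k+l}$, that an $f_j$ in degree $r_j$ can only produce (via left/right multiplication by $e_1,\dots$) elements that are already accounted for, so that $\mathcal{A}_1$ must itself contain enough independent generators; counting, $\dim\mathcal{A}_1\ge 1+\#\{j:r_j=1\}$ and propagating the generation constraint up to degree $s$ forces at least one $r_j\ (j\le s)$ to equal $s$ or less in a way incompatible with $r_s\ge s+1$ and $r_1\le\dots\le r_{s-1}\le s-1$ simultaneously exhausting all available "slots". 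The genuinely delicate step — and the one I expect to be the main obstacle — is justifying rigorously that no $f_j$ can serve as a new generator in a degree $>1$ beyond what the count allows, i.e. ruling out that the grading "skips" a degree below $s$; this needs the naturally-graded hypothesis ($\gr\mathcal{A}\cong\mathcal{A}$) to guarantee $\mathcal{A}_i\neq 0$ for all $i\le n-p$ and then a careful tally of $\sum_{i\le s}\dim\mathcal{A}_i$ against the forced contributions of $e_1,\dots,e_s,f_1,\dots,f_{s-1}$, yielding the strict inequality that closes the contradiction.
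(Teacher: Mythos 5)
There is a genuine gap, and you have in fact flagged it yourself: the step you call ``the main obstacle'' --- showing that no $f_j$ can generate anything new in degree $r_s$ --- is precisely the entire content of the theorem, and your proposal never carries it out. A dimension count or pigeonhole argument cannot close the contradiction on its own, because there is no combinatorial obstruction to a configuration such as $r_1=1$, $r_2=5$: the spaces $\mathcal{A}_i$ for $i\ge 2$ are only required to contain $e_i$, and nothing in the bookkeeping of $\sum_{i\le s}\dim\mathcal{A}_i$ prevents the extra dimensions from being postponed to high degrees. The obstruction is algebraic and must be extracted from associativity. Concretely, under the assumption $r_s>s$ (with $r_t\le t$ for $t<s$ by induction), one must show that the graded piece $\mathcal{A}_{r_s}$, which is spanned by $e_{r_s}$ together with products of the form $f_t\cdot(\text{element of degree } r_s-r_t)$ with $t\le s-1$, is actually just $\langle e_{r_s}\rangle$; since $f_s$ is supposed to live there and be independent of $e_{r_s}$, this is the contradiction. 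The paper does this by writing $f_te_{r_s-r_t}=f_t(e_1e_{r_s-r_t-1})=(f_te_1)e_{r_s-r_t-1}$ and then analyzing where $f_te_1\in\mathcal{A}_{r_t+1}$ can land: either $r_t+1=r_{t+1}$, in which case $f_te_1=\alpha e_{r_{t+1}}+\beta f_{t+1}$ and one descends to the already-controlled product $f_{t+1}e_{r_s-r_{t+1}}$, or $r_t+1>r_{t+1}$, in which case $f_te_1\in\langle e_{r_t+1}\rangle$ outright. This descending induction on $t$, anchored at $t=s-1$ where $f_{s-1}e_1\in\langle e_{r_{s-1}+1}\rangle$ forces $f_{s-1}e_{r_s-r_{s-1}}\in\langle e_{r_s}\rangle$, is the missing engine of the proof.

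Your base cases are also not addressed with the needed precision: $r_1=1$ follows because otherwise $\mathcal{A}$ would be one-generated, hence null-filiform, contradicting $C(\mathcal{A})=(n-p,1,\dots,1)$; and $r_2\le 2$ already requires the miniature version of the associativity computation above ($f_1e_{r_2-1}=(f_1e_{r_2-2})e_1=\alpha e_{r_2}$). So while your overall shape (minimal counterexample, contradiction located at the degree $r_s$) matches the paper's strategy, the proposal as written does not constitute a proof: replace the dimension-tally heuristics with the explicit claim $f_te_{r_s-r_t}\in\langle e_{r_s}\rangle$ for $1\le t\le s-1$ and prove it by the two-case associativity argument.
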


\begin{proof}
It should be noted that $r_1 = 1$. Indeed, if $r_1 > 1$, then the algebra $\mathcal{A}$ has one generator and  it is null-filiform associative algebra and hence $C(\mathcal{A})=(n)$ we obtain a contradiction with our hypothesis.

We shall prove that $r_2 \leq 2$. Let us suppose the opposite, i.e., $r_2 > 2$. Then
\[\mathcal{A}_1=\langle e_1,f_1\rangle\, \quad \mathcal{A}_2=\langle e_2\rangle \quad \dots \quad \mathcal{A}_{r_2-1}=\langle e_{r_2-1}\rangle,\]
\[\mathcal{A}_{r_2}=\mathcal{A}_1\mathcal{A}_{r_2-1}=\langle e_1, f_1\rangle\cdot\langle e_{r_2-1}\rangle=\langle e_{r_2}, f_1e_{r_2-1}\rangle.\]

Consider the equations
\[f_1e_{r_2-1}=f_1(e_{r_2-2}e_1)=(f_1e_{r_2-2})e_1=\alpha e_{r_2-1}e_1=\alpha e_{r_2}.\]

Since $\mathcal{A}_{r_2}=\langle e_{r_2}\rangle$. So, $f_2\notin \mathcal{A}_{r_2},$ and we obtain a contradiction, hence $r_2\leq 2$.

By induction on $s$ we prove that $r_s \leq s$. Let us assume that $r_k\leq k$ where  $1\leq k\leq s-1$. Suppose that $r_s > s$. First we show that
\[f_te_{r_s-r_t}\subseteq\langle e_{r_s}\rangle, \quad 1\leq t\leq s-1.\]

If $t=s-1$ then,
\[f_{s-1}e_{r_s-r_{s-1}}=f_{s-1}(e_1e_{r_s-r_{s-1}-1})=(f_{s-1}e_1)e_{r_s-r_{s-1}-1}=\alpha e_{r_{s-1}+1}e_{r_s-r_{s-1}-1}=\alpha e_{r_s},\]
which implies $f_{s-1}e_{r_s-r_{s-1}}\in\langle e_{r_s}\rangle$.

So we have to show $f_te_{r_s-r_t}\subseteq\langle e_{r_s}\rangle, \ 1\leq t\leq s-2$. Since, $f_te_{r_s-r_t}=f_t(e_1e_{r_s-r_t-1})=(f_te_1)e_{r_s-r_t-1}$ and $f_te_1\in \mathcal{A}_{r_t}\mathcal{A}_1\subseteq\mathcal{A}_{r_t+1}$,
it is necessary distinguish the following cases:

\begin{itemize}
  \item \textbf{Case 1.} If $r_t+1=r_{t+1}$ then we have $\mathcal{A}_{r_t+1}=\mathcal{A}_{r_{t+1}}=\langle e_{r_{t+1}}, f_{t+1}\rangle$. From this we conclude that $f_{t+1}e_{r_{s}-r_{t}-1}=f_{t+1}e_{r_{s}-r_{t+1}}\in\langle e_{r_{s}}\rangle$, hence that
      \[f_te_{r_s-r_t}=f_t(e_1e_{r_s-r_t-1})=(f_te_1)e_{r_s-r_t-1}=(\alpha e_{r_{t+1}}+\beta f_{t+1})e_{r_s-r_t-1}\in\langle e_{r_{s}}\rangle.\]
  \item \textbf{Case 2.} If $r_t+1>r_{t+1}$ then $f_te_1\in\langle e_{r_t+1}\rangle=\mathcal{A}_{r_t+1}$ and by using similar arguments than
above case we obtain $f_te_{r_s-r_t}\in\langle e_{r_s}\rangle$.
\end{itemize}

Thus the embedding $f_te_{r_s-r_t}\in\langle e_{r_s}\rangle, 1\leq t\leq s-1$, is proved.

Let us prove that $\mathcal{A}_{r_s}\subseteq\langle e_{r_s}\rangle$ under the assumption $r_s > s$.

Now we consider the product
\[\mathcal{A}_{r_s}=\mathcal{A}_1\mathcal{A}_{r_s-1}=\langle e_1, f_i\rangle\cdot\langle e_{r_s-1}\rangle=\langle e_{r_s},f_ie_{r_s-1}\rangle\]
for some $1\leq i\leq s-1$. The embedding $f_te_{r_s-r_t}\in\langle e_{r_s}\rangle$ deduces $\mathcal{A}_{r_s}\subseteq\langle e_{r_s}\rangle$, i.e., we obtain a contradiction with the assumption $r_s>s$ which completes the proof of the theorem.
\end{proof}

\textit{Remark.} An arbitrary $p$-filiform associative algebra satisfying the property $r_i = 1$ for $1 \leq i \leq p$ is a naturally graded filiform associative algebra of degree $p$ which classified in \cite{kar3}.

Let us denote that $\mathcal{A}_1=\langle e_1,f_1,\dots,f_{s_1}\rangle$ and $\mathcal{A}_i=\langle e_i,f_{s_1+\dots+s_{i-1}+1},\dots,f_{s_1+\dots+s_i}\rangle$ for $2\leq i\leq n-p$, where $s_1,\dots,s_{n-p}$ are natural number such that $s_1+\dots+s_{n-p}=p$ and $dim\mathcal{A}_i=s_i+1$ for $1\leq i\leq n-p$.

\begin{teo}Let $\mathcal{A}$ is $n$ dimensional naturally graded p-filiform associative algebra. Then $0\leq s_{n-p}\leq \dots\leq s_2\leq s_1<p$.
\end{teo}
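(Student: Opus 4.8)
I will deduce the whole statement from the single assertion that right multiplication by $e_1$ maps $\mathcal{A}_i$ onto $\mathcal{A}_{i+1}$ for $1\le i\le n-p-1$. This is enough: $\mathcal{A}$ being naturally graded gives $\mathcal{A}_{i+1}=\mathcal{A}_i\mathcal{A}_1$, so if $\mathcal{A}_i\mathcal{A}_1=\mathcal{A}_ie_1$ then $\mathcal{A}_{i+1}$ is a linear image of $\mathcal{A}_i$ and hence $\dim\mathcal{A}_{i+1}\le\dim\mathcal{A}_i$, i.e. $s_{i+1}\le s_i$; while $s_1<p$ is simply the statement that not all of the $f_j$ lie in $\mathcal{A}_1$ (equivalently, not all $r_j$ equal $1$), the opposite being the naturally graded filiform algebra of degree $p$ set aside in the Remark. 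Write $W=\langle f_1,\dots,f_{s_1}\rangle\subseteq\mathcal{A}_1$ and let $V_i$ be the span of those $f$'s lying in $\mathcal{A}_i$, so $\mathcal{A}_i=\langle e_i\rangle\oplus V_i$ with $\dim V_i=s_i$; the adapted-basis relations give $e_1V_i=0$, $e_iW=0$ and $e_ie_j=e_{i+j}$. Since then $\mathcal{A}_iW=V_iW$ and $\mathcal{A}_ie_1=\langle e_{i+1}\rangle+V_ie_1$, the assertion reduces to proving
\[V_iW\subseteq\langle e_{i+1}\rangle+V_ie_1\qquad(1\le i\le n-p-1).\]

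The crucial case is $i=1$, where I want $W^2\subseteq\langle e_2\rangle$, and in fact $W^2=0$ once $n-p\ge 3$. To obtain this I would perturb the generator $e_1$. For a fixed $w\in W$ and all but finitely many $t$ the element $x=e_1+tw\in\mathcal{A}\setminus\mathcal{A}^2$ still has $C(x)=(n-p,1,\dots,1)$: one always has $C(x)\le C(\mathcal{A})=(n-p,1,\dots,1)$, while $\rank L_x^{n-p-1}\ge 1$ is a nonempty Zariski-open condition in $t$ (it holds at $t=0$, where $\rank L_{e_1}^{n-p-1}=1$), and this pins the largest Jordan block of $L_x$ to size $n-p$. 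For such $x$ the operator $L_x$ is homogeneous of degree $+1$ with $\rank L_x=n-p-1$, and distributing this rank over the graded pieces, $\rank L_x=\sum_{j=1}^{n-p-1}\dim L_x(\mathcal{A}_j)$ with every summand positive (the non-vanishing of $L_x^{n-p-1}$ being concentrated on $\mathcal{A}_1$), so $\dim L_x(\mathcal{A}_1)=1$. Since $L_x(\mathcal{A}_1)=(e_1+tw)\mathcal{A}_1$ contains the nonzero vector $(e_1+tw)e_1=e_2+twe_1$ and also contains $t\,ww'$ for every $w'\in W$, I get $ww'\in\langle e_2+twe_1\rangle$; letting $t$ run over two admissible values and intersecting these lines — which are distinct unless $we_1\in\langle e_2\rangle$ — forces $ww'\in\langle e_2\rangle$, hence $W^2\subseteq\langle e_2\rangle$. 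The upgrade to $W^2=0$ for $n-p\ge 3$ then follows because $e_1(ww')=(e_1w)w'=0$ places $W^2$ in $\ker L_{e_1}\cap\mathcal{A}_2=V_2$, which meets $\langle e_2\rangle$ trivially.

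Finally I would feed this back into the reduction. If $n-p=2$ only $i=1$ occurs and $V_1W=W^2\subseteq\langle e_2\rangle=\langle e_1e_1\rangle\subseteq\mathcal{A}_1e_1$. If $n-p\ge 3$ then $W^2=0$ gives $W^c=0$ for all $c\ge 2$; since $e_1w=0$ forbids an $e_1$ from standing immediately before an $f$ in a nonzero product, every nonzero product of $i$ factors from $\mathcal{A}_1=\langle e_1\rangle\oplus W$ collapses to the form $(w_1\cdots w_c)e_1^{i-c}$, so only $c\le 1$ contributes and $V_i=We_{i-1}$ for $1\le i\le n-p-1$ (with $We_0:=W$); then $V_iW=(We_{i-1})W=0$ because $(we_{i-1})w'=w(e_{i-1}w')=0$, and also $V_1W=W^2=0$. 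Either way $\mathcal{A}_iW\subseteq\mathcal{A}_ie_1$, hence $\mathcal{A}_{i+1}=\mathcal{A}_ie_1$ and $s_{i+1}\le s_i$; together with $s_1<p$ this gives $0\le s_{n-p}\le\dots\le s_2\le s_1<p$. I expect the only genuinely delicate point to be the perturbation step yielding $W^2\subseteq\langle e_2\rangle$: it hinges on showing that a generic perturbation of $e_1$ retains a Jordan block of the maximal size $n-p$, and then converting that into the rank equality $\dim L_{e_1+tw}(\mathcal{A}_1)=1$.
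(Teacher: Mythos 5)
Your proof is correct and follows essentially the same route as the paper: the decisive step in both is to perturb the generator to $e_1+tf_i$ and use the fact that the characteristic sequence caps $\rank L_{e_1+tf_i}$ at $n-p-1$ to force the products $f_if_j$ into $\langle e_2\rangle$ (and then to $0$ via associativity with $e_1$ on the left), after which the monotonicity of the $s_i$ follows from right multiplication by $e_1$, i.e.\ your $\mathcal{A}_{i+1}=\mathcal{A}_ie_1$ is the paper's $f_je_i=(f_je_{i-1})e_1$. Your genericity-in-$t$/two-lines argument and the word-collapse reduction are just a cleaner, coordinate-free packaging of the paper's direct structure-constant computation with a single fixed $A\neq 0$.
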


\begin{proof} Since $\mathcal{A}_1=\langle e_1,f_1,\dots,f_{s_1}\rangle, \mathcal{A}_2=\langle e_2,f_{s_1+1},\dots,f_{s_1+s_2}\rangle$ and $\mathcal{A}_1\mathcal{A}_1\subseteq\mathcal{A}_2$  we denote
\[f_ie_1=a_ie_2+\sum\limits_{j=1}^{s_2}a_{ij}f_{s_1+j}, \quad f_if_j=b_{ij}e_2+\sum\limits_{k=1}^{s_2}b_{ijk}f_{s_1+k}, \quad 1\leq i,j\leq s_1.\]

Then, from the equalities:
\[\begin{array}{l}
0=(e_1f_i)e_1=e_1(f_ie_1)=a_ie_3+\sum\limits_{j=1}^{s_2}a_{ij}e_1f_{s_1+j}=a_ie_3,\\
0=(e_1f_i)f_j=e_1(f_if_j)=b_{ij}e_3+\sum\limits_{k=1}^{s_2}b_{ijk}e_1f_{s_1+j}=b_{ij}e_3,
\end{array}\]
we obtain
\[f_ie_1=\sum\limits_{j=1}^{s_2}a_{ij}f_{s_1+j}, \quad f_if_j=\sum\limits_{k=1}^{s_2}b_{ijk}f_{s_1+k}, \quad 1\leq i,j\leq s_1.\]

We make the following general transformation of basis for $1\leq i\leq s_1$:
\[e_1^\prime=e_1+Af_i(A\neq0), \quad e_j^\prime=e_1^\prime e_{j-1}^\prime, \quad  2\leq j\leq n-p. \]
Note that $e_1^\prime e_{n-p}^\prime=0$ and
\[e_1^\prime f_j=(e_1+Af_i)f_j=Af_if_j=A\sum\limits_{k=1}^{s_2}b_{ijk}f_{s_1+k}, \quad 1\leq i,j\leq s_1.\]
If $b_{ijk}\neq0$, then the rank of $L_{e_1^\prime}$ would be greater than $n-p-1$, which contradicts the assumption
$C(\mathcal{A}) = (n-p,1,\dots, 1)$. Hence,
\[f_if_j=0, \quad 1\leq i,j\leq s_1.\]
Analogously, we can deduce
\[f_if_j=f_jf_i=0, \quad f_if_t, f_tf_i\in\langle e_{n-p}\rangle \]
for $1\leq i\leq s_1, s_1+\dots+s_{k-1}+1\leq j\leq s_1+\dots+s_k,k\neq n-p-1, s_1+\dots+s_{n-p-2}+1\leq t\leq s_1+\dots+s_{n-p-1}.$

So, we have $\mathcal{A}_2=\mathcal{A}_1\mathcal{A}_1=\langle e_2, f_1e_1,\dots,f_{s_1}e_1\rangle\subseteq\langle e_2,f_{s_1+1},\dots,f_{s_1+s_2}\rangle$ therefore, $dim\mathcal{A}_2\leq dim\mathcal{A}_1=s_1+1$ and $s_2\leq s_1$.

Now we show that $s_1+\dots+s_k<p$ yields $s_{k+1}\neq0$. Let us suppose that $s_{k+1}=0$ then $\mathcal{A}_{k+1}=\langle e_{k+1}\rangle$. As $\mathcal{A}_{k+1}=\mathcal{A}_1\mathcal{A}_k$ we deduce $f_ie_k=\alpha_ie_{k+1}$ for $1\leq i\leq s_1$. From \[\alpha_ie_{k+2}=\alpha_ie_1e_{k+1}=e_1(f_ie_k)=(e_1f_i)e_k=0\]
we obtain $\alpha_i=0$ for  $1\leq i\leq s_1$. We have $f_ie_k=0$ for $1\leq i\leq s_1$. The chain equalities \[f_ie_{k+1}=f_i(e_ke_1)=(f_ie_k)e_1=0\] implies that $f_ie_j=0$ for $1\leq i\leq s_1$ and $k\leq j\leq n-p-1.$ Thus $\mathcal{A}_{l}=\langle e_l\rangle$ for $k+2\leq l\leq n-p$. However, if we had these equalities the vector $f_p$ would not be obtained and its implies that $s_{k+1}\neq0$.

Since $\mathcal{A}_{i+1}=\mathcal{A}_1\mathcal{A}_i=\langle e_{i+1}, f_1e_i,\dots,f_{s_1}e_i\rangle$ and $f_je_i=(f_je_{i-1})e_1$ for $1\leq j\leq s_1$, implies that $dim(\langle f_1e_i,\dots,f_{s_1}e_i\rangle)\leq dim(\langle f_1e_{i-1},\dots,f_{s_1}e_{i-1}\rangle)$, follows $dim\mathcal{A}_{i+1}\leq dim\mathcal{A}_{i}$ for $1\leq i\leq n-p-1$. Hence, it is proved that $0\leq s_{n-p}\leq \dots\leq s_2\leq s_1<p$.

\end{proof}

\begin{lemma} Let $\mathcal{A}$ is $n$ dimensional naturally graded p-filiform associative algebra under the above assumptions. Then
\[f_{s_1+\dots+s_k+i}e_j=\left\{\begin{array}{cl}
 f_{s_1+\dots+s_{k+j}+i},  & 1\leq i\leq s_{k+j+1}, \\
 0,  & s_{k+j+1}+1\leq i\leq s_{k+j} \\
\end{array}\right.\]
for $1\leq j\leq n-p-k, 1\leq k \leq n-p-2.$
\end{lemma}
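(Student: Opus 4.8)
\emph{Overall strategy.} The plan is to prove the identity first for $j=1$, after a further normalisation of the adapted basis, and then to recover all $j$ from it at once. Since $e_1e_i=e_{i+1}$, we have $e_j=e_1^{\,j}$ for $1\le j\le n-p$, so associativity gives
\[f_{s_1+\dots+s_k+i}\,e_j=\bigl(\cdots\bigl((f_{s_1+\dots+s_k+i}\,e_1)\,e_1\bigr)\cdots\bigr)e_1\qquad(j\ \text{factors}\ e_1).\]
Hence it suffices to establish the $j=1$ statement, namely that right multiplication by $e_1$ carries the $i$-th basic $f$-vector of a layer $\mathcal{A}_m$ to the $i$-th basic $f$-vector of $\mathcal{A}_{m+1}$ when $i\le s_{m+1}$ and to $0$ when $s_{m+1}<i\le s_m$; iterating this $j$ times through the layers then yields precisely the asserted formula.

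\emph{The case $j=1$ via normalisation.} Write $V_m=\langle f_{s_1+\dots+s_{m-1}+1},\dots,f_{s_1+\dots+s_m}\rangle$, so $\mathcal{A}_m=\langle e_m\rangle\oplus V_m$ with $\dim V_m=s_m$, and let $\varphi_m\colon V_m\to\mathcal{A}_{m+1}$ be right multiplication by $e_1$. First, for $1\le m\le n-p-2$ one has $\varphi_m(V_m)\subseteq V_{m+1}$: if $v\in V_m$ and $ve_1=\lambda e_{m+1}+v'$ with $v'\in V_{m+1}$, then $0=(e_1v)e_1=e_1(ve_1)=\lambda e_{m+2}$ (using $e_1v=0$), and $e_{m+2}\ne 0$ forces $\lambda=0$. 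Second, each such $\varphi_m$ is surjective onto $V_{m+1}$: from $\mathcal{A}^{m+1}=\mathcal{A}^m\mathcal{A}$ we get $\mathcal{A}_{m+1}=\mathcal{A}_m\mathcal{A}_1=\langle e_{m+1}\rangle+\varphi_m(V_m)+V_mV_1$, the products $e_mf_t$ vanish and $V_mV_1=0$ for $m\le n-p-2$ by the relations obtained in the proof of Theorem~3.2, so $\mathcal{A}_{m+1}=\langle e_{m+1}\rangle+\varphi_m(V_m)$, and comparing with $\dim\mathcal{A}_{m+1}=s_{m+1}+1$ gives $\varphi_m(V_m)=V_{m+1}$. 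Granting the same for $m=n-p-1$ (the delicate point, below), the composites $\Phi_m:=\varphi_{m-1}\circ\cdots\circ\varphi_1\colon V_1\to V_m$ are all surjective, so
\[0=\ker\Phi_1\subseteq\ker\Phi_2\subseteq\cdots\subseteq\ker\Phi_{n-p}\subseteq V_1,\qquad \dim\ker\Phi_m=s_1-s_m,\]
is a genuine flag because $s_1\ge s_2\ge\cdots\ge s_{n-p}$. Now choose $f_1,\dots,f_{s_1}$ adapted to this flag, i.e.\ $\ker\Phi_m=\langle f_{s_m+1},\dots,f_{s_1}\rangle$, and \emph{define} the basic $f$-vectors of the higher layers by $f_{s_1+\dots+s_{m-1}+i}:=\Phi_m(f_i)$ for $1\le i\le s_m$; since $\Phi_m$ is injective on $\langle f_1,\dots,f_{s_m}\rangle$ these form a basis of $V_m$, and moving $e_1$ across $\Phi_m(f_i)$ by associativity and using $e_1f_i=0$ shows $e_1\cdot f_{s_1+\dots+s_{m-1}+i}=0$, so the basis remains adapted. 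By construction
\[f_{s_1+\dots+s_{m-1}+i}\,e_1=\varphi_m\bigl(\Phi_m(f_i)\bigr)=\Phi_{m+1}(f_i),\]
which equals $f_{s_1+\dots+s_m+i}$ when $i\le s_{m+1}$ and equals $0$ when $s_{m+1}<i\le s_m$ (as then $f_i\in\ker\Phi_{m+1}$). This is the $j=1$ statement, and the Lemma follows.

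\emph{The main obstacle.} Everything hinges on the top index $m=n-p-1$, where the argument for $\varphi_m(V_m)\subseteq V_{m+1}$ breaks down: $e_1(ve_1)$ now lands in $\mathcal{A}_{n-p+1}=0$, so no information is obtained, and a priori $\varphi_{n-p-1}(V_{n-p-1})$ could meet $\langle e_{n-p}\rangle$ — which would destroy both the surjectivity onto $V_{n-p}$ and, when $s_{n-p-1}>s_{n-p}$, the vanishing clause of the Lemma. Excluding this is the heart of the matter: one writes $v=we_{n-p-2}$ with $w\in V_1$ (possible since the $\varphi_\ell$ with $\ell\le n-p-2$ are onto), so that $ve_1=we_{n-p-1}$, and tries to rule out $we_{n-p-1}\notin V_{n-p}$ by a change-of-generator argument of the kind used in Theorem~3.2 — replacing $e_1$ by $e_1'=e_1+Aw$, rebuilding $e_1'^{\,2},e_1'^{\,3},\dots$, and showing that otherwise $L_{e_1'}$ would acquire a Jordan block of size $\ge 2$ or a chain longer than $n-p$, contradicting $C(\mathcal{A})=(n-p,1,\dots,1)$. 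Verifying that this forced dichotomy really does produce a forbidden Jordan type — equivalently, that right multiplication by $e_1$ can be normalised simultaneously on \emph{all} the layers, the topmost included — is the step I expect to be genuinely delicate; the remainder is the bookkeeping above.
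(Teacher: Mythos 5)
Your strategy coincides with the paper's: reduce to $j=1$ by writing $e_j$ as a $j$-fold product of $e_1$'s and inducting via associativity, and for $j=1$ normalise the basis layer by layer so that right multiplication by $e_1$ carries the $i$-th $f$-vector of $\mathcal{A}_m$ to the $i$-th $f$-vector of $\mathcal{A}_{m+1}$ or to zero. Your flag-of-kernels formulation ($\ker\Phi_1\subseteq\cdots\subseteq\ker\Phi_{n-p}$, the nesting guaranteed by $s_1\ge\cdots\ge s_{n-p}$) is a cleaner packaging of what the paper does by hand — selecting indices $m_1,\dots,m_{s_{l+2}}$, permuting the $f$'s consistently through all lower layers, and subtracting off the images of the remaining generators. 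Up to that repackaging the two arguments are the same.

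However, your proof is incomplete, and you say so yourself: you never establish the case $m=n-p-1$, i.e.\ that $V_{n-p-1}e_1\subseteq V_{n-p}$ rather than merely $\subseteq\mathcal{A}_{n-p}$. This is not a side issue — the lemma's assertion at $k+j=n-p-1$ (in particular that $f_{s_1+\dots+s_{n-p-2}+i}\,e_1$ equals $f_{s_1+\dots+s_{n-p-1}+i}$ or $0$, with no $e_{n-p}$-component) is exactly the statement you leave open, since the associativity trick $0=(e_1v)e_1=e_1(ve_1)=\lambda e_{m+2}$ gives no information once $e_{m+2}=e_{n-p+1}=0$. You sketch a plausible route (replace $e_1$ by $e_1'=e_1+Aw$ and argue that a nonzero $e_{n-p}$-component would force $L_{e_1'}$ into a forbidden Jordan type, as in the proof of Theorem 3.2) but do not carry it out, and without it both the vanishing clause for $s_{n-p}<i\le s_{n-p-1}$ and the surjectivity of $\varphi_{n-p-1}$ onto $V_{n-p}$ remain unproved. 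For what it is worth, the paper's own proof treats the top layer identically to the others and does not isolate this difficulty, so you have correctly identified the one genuinely delicate point of the argument — but flagging it is not the same as closing it, and as written the proposal does not prove the lemma.
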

\begin{proof} Since
\[\langle e_{l+2}, f_{s_1+\dots+s_{l+1}+1},\dots, f_{s_1+\dots+s_{l+2}}\rangle=\mathcal{A}_{l+2}\supseteq\mathcal{A}_{l+1}\mathcal{A}_{1}=\]\[\langle e_{l+2}, f_{s_1+\dots+s_l+1}e_1,\dots, f_{s_1+\dots+s_{l+1}}e_1\rangle,\] there exist $1\leq m_1, m_2, \dots, m_{s_{l+2}}\leq s_{l+1}$ such that \[\langle f_{s_1+\dots+s_l+m_1}e_1,\dots, f_{s_1+\dots+s_l+m_{s_l+2}}e_1\rangle=\langle f_{s_1+\dots+s_{l+1}+1},\dots,f_{s_1+\dots+s_{l+2}}\rangle.\]

By making the next basis transformations
\[\begin{array}{llll}
f_j^\prime=f_{m_j}, & f_{s_1+j}^\prime=f_{s_1+m_j}, &  \dots, & f_{s_1+\dots+s_l+j}^\prime=f_{s_1+\dots+s_l+m_j},\\
 f_{m_j}^\prime=f_j, & f_{s_1+m_j}^\prime=f_{s_1+j}, &\dots, & f_{s_1+\dots+s_l+m_j}^\prime=f_{s_1+\dots+s_l+j}
\end{array}\]
for $1\leq j\leq s_{i+1},$ we get
\[f_{s_1+\dots+s_{k-1}+i}^\prime e_1=f_{s_1+\dots+s_k+i}, \quad 1\leq i\leq s_{k+1}, 1\leq k\leq l.\]
Thus $\langle f_{s_1+\dots+s_l+1}^\prime e_1, \dots, f_{s_1+\dots+s_{l+1}}^\prime e_1\rangle=\langle f_{s_1+\dots+s_{l+1}+1},\dots f_{s_1+\dots+s_{l+2}}\rangle.$

Then subsequently $f_{s_1+\dots+s_{l+1}+i}^\prime=f_{s_1+\dots+s_l+i}^\prime e_1$ for $1\leq i\leq s_{l+1}$ we have
\[f_{s_1+\dots+s_{k-1}+i}^\prime e_1=f_{s_1+\dots+s_k+i}^\prime, \quad 1\leq i\leq s_{k+1}, \quad 1\leq k\leq l+1.\]

As $m_1,\dots,m_{s_l+2}<s_{l+1}$ the following zero multiplications stay unchanged:
\[f_{_1+\dots+s_{k-1}+s_{k+1}+i}e_1=0, \quad 1\leq i\leq s_k-s_{k+1}, \quad 1\leq k\leq l.\]

If \[f_{s_1+\dots+s_l+i}e_1=\sum\limits_{k=1}^{s_{l+2}}c_kf_{s_1+\dots+s_{l+1}+k}, \quad s_{l+2}+1\leq s_{l+1}\]
then the following change of basis
\[f_{s_1+\dots+s_l+i}^\prime=f_{s_1+\dots+s_l+i}-\sum\limits_{k=1}^{s_{l+2}}c_kf_{s_1+\dots+s_{l+1}+k}\]
implies that
\[f_{s_1+\dots+s_l+i}^\prime e_1=0, \quad s_{l+2}+1\leq i\leq s_{l+1}.\]
Following we have
\[f_{s_1+\dots+s_{t-1}+i}e_1=\left\{\begin{array}{cl}
 f_{s_1+\dots+s_{t}+i},  & 1\leq i\leq s_{t+1}, \\
 0,  & s_{t+1}+1\leq i\leq s_{t} \\
\end{array}\right.\]
for $1\leq t\leq l$ and $1\leq l\leq n-p-2.$

Now we shall prove the following expression by an induction on $j:$
\[f_{s_1+\dots+s_k+i}e_j=\left\{\begin{array}{cl}
 f_{s_1+\dots+s_{k+j}+i},  & 1\leq i\leq s_{k+j+1}, \\
 0,  & s_{k+j+1}+1\leq i\leq s_{k+j} \\
\end{array}\right.\]
for $1\leq j\leq n-p-k.$

Obviously, the expression holds for $j=1$. Let us assume that the equality holds for  $1\leq j\leq n-p-k$ where $2\leq k\leq n-p-2.$ Note that the multiplications $f_{s_1+\dots+s_k+i}e_j=0$ holds by property of graduation. We will prove it for $j+1:$
\[f_{s_1+\dots+s_k+i}e_{j+1}=f_{s_1+\dots+s_k+i}(e_{j}e_1)=(f_{s_1+\dots+s_k+i}e_{j})e_1=\]\[f_{s_1+\dots+s_{k+j}+i}e_1=f_{s_1+\dots+s_{k+j+1}+i}.\]
The lemma is proved
\end{proof}

We consider
\[f_{s_1+\dots+s_t+i}f_j=(f_{s_1+\dots+s_{t-1}+i}e_1)f_j=f_{s_1+\dots+s_{t-1}+i}(e_1f_j)=0\]
and
\[f_jf_{s_1+\dots+s_t+i}=f_j(f_ie_t)=(f_jf_i)e_t=0\]
for $1\leq i\leq s_{t+2}, 1\leq j\leq p.$

Let us denote
\[f_{s_1+\dots+s_k+i}f_{s_1+\dots+s_t+j}=b_{i,j}^{k,t}e_{k+t+2}+\sum\limits_{l=1}^{s_{k+t+2}}b_{i,j,l}^{k,t}f_{s_1+\dots+s_{k+t+1}+l}\]
where $ 1\leq k \leq n-p-3,  1\leq t\leq n-p-2-k, s_{k+2}+1\leq i\leq s_{k+1}$ and $s_{t+2}+1\leq j\leq s_{t+1}$.

Considering the next identity:
\[0=(e_1f_{s_1+\dots+s_k+i})f_{s_1+\dots+s_t+j}=e_1(f_{s_1+\dots+s_k+i}f_{s_1+\dots+s_t+j})=b_{ij}^{kt}e_1e_{k+t+2}=b_{i,j}^{k,t}e_{k+t+3}\]
we obtain that:
\[f_{s_1+\dots+s_k+i}f_{s_1+\dots+s_t+j}=\sum\limits_{l=1}^{s_{k+t+2}}b_{i,j,l}^{k,t}f_{s_1+\dots+s_{k+t+1}+l}\]
where $1\leq k \leq n-p-3,  1\leq t\leq n-p-2-k, s_{k+2}+1\leq i\leq s_{k+1}$ and $s_{t+2}+1\leq j\leq s_{t+1}$.

Considering the next identity:

Let $1\leq k \leq n-p-4,  1\leq t\leq n-p-3-k, s_{k+2}+1\leq i\leq s_{k+1}$ and $s_{t+2}+1\leq j\leq s_{t+1}$.

\[(f_{s_1+\dots+s_k+i}f_{s_1+\dots+s_t+j})e_1=
\sum\limits_{l=1}^{s_{k+t+3}}b_{i,j,l}^{k,t+1}f_{s_1+\dots+s_{k+t+1}+l}e_1=\sum\limits_{l=1}^{s_{k+t+3}}b_{i,j,l}^{k,t+1}f_{s_1+\dots+s_{k+t+2}+l},\]

\[(f_{s_1+\dots+s_k+i}f_{s_1+\dots+s_t+j})e_1=
f_{s_1+\dots+s_k+i}(f_{s_1+\dots+s_t+j}e_1)=0.\]

\[(f_{s_1+\dots+s_k+i}f_{s_1+\dots+s_t+j})e_1=
f_{s_1+\dots+s_k+i}(f_{s_1+\dots+s_t+j}e_1)=0.\]

We have
\[f_{s_1+\dots+s_k+i}f_{s_1+\dots+s_t+j}=0,\]
where $ 1\leq k \leq n-p-4,  1\leq t\leq n-p-3-k, s_{k+2}+1\leq i\leq s_{k+1}$ and $s_{t+2}+1\leq j\leq s_{t+1}$.

Moreover
\[f_{s_1+\dots+s_k+i}f_{s_1+\dots+s_t+j}=b_{i,j}^{k,t}e_{n-p}+\sum\limits_{l=1}^{s_{n-p}}b_{i,j,l}^{k,t}f_{s_1+\dots+s_{n-p-1}+l}\]
where $k+t=n-p-2, kt\neq0, s_{k+2}+1\leq i\leq s_{k+1}$ and $s_{t+2}+1\leq j\leq s_{t+1}$.

\begin{teo} Let $\mathcal{A}$ is $n$ dimensional naturally graded p-filiform associative algebra under the above assumptions. Then
\[\begin{array}{ll}
e_ie_j=e_{i+j}, & 2\leq i+j\leq n-p,\\
f_{s_1+\dots+s_k+i}e_j=f_{s_1+\dots+s_{k+j}+i},  & 1\leq k \leq n-p-2, 1\leq j\leq n-p-k, 1\leq i\leq s_{k+j+1},\\
\end{array} \]

\[f_{s_1+\dots+s_k+i}f_{s_1+\dots+s_t+j}=b_{i,j}^{k,t}e_{n-p}+\sum\limits_{l=1}^{s_{n-p}}b_{i,j,l}^{k,t}f_{s_1+\dots+s_{n-p-1}+l}\]
where $k+t=n-p-2, kt\neq0, s_{k+2}+1\leq i\leq s_{k+1}$ and $s_{t+2}+1\leq j\leq s_{t+1}$.
Other multiplications are zero.
\end{teo}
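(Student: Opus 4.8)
The plan is to read the multiplication table directly off the material already assembled in this section and then check that no further product can be nonzero. There are three blocks of relations: the products $e_ie_j$, the products $f_\bullet e_j$, and the products $f_\bullet f_\bullet$.

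The products $e_ie_j$ were recorded when the adapted basis was introduced: from $e_1e_i=e_{i+1}$ an easy induction using associativity gives $e_ie_j=e_{i+j}$ for $2\le i+j\le n-p$, and $e_kf_s=0$ for all $k,s$ follows from $e_1f_s=0$ together with $e_k=e_1e_{k-1}$. The products $f_{s_1+\dots+s_k+i}e_j$ are exactly the content of the preceding Lemma, which gives $f_{s_1+\dots+s_{k+j}+i}$ when $1\le i\le s_{k+j+1}$ and $0$ otherwise; so the second row of the table holds and every descending $fe$-product not on the list is zero.

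It remains to treat the $ff$-products, which is where the content of the theorem sits. For $f_{s_1+\dots+s_k+i}\in\mathcal{A}_{k+1}$ and $f_{s_1+\dots+s_t+j}\in\mathcal{A}_{t+1}$ the grading puts the product in $\mathcal{A}_{k+t+2}$, which is zero once $k+t+2>n-p$. When $k+t+2<n-p$ one expands the product inside $\mathcal{A}_{k+t+2}$ and annihilates it term by term: multiplying by $e_1$ on the left and using $e_1f_{s_1+\dots+s_k+i}=0$ removes the $e_{k+t+2}$-component, while multiplying by $e_1$ on the right and using $f_{s_1+\dots+s_t+j}e_1=0$ on the bottom range $s_{t+2}+1\le j\le s_{t+1}$, together with the Lemma, removes the remaining $f$-components --- this is precisely the chain of identities displayed immediately before the theorem --- so the product vanishes. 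Products in which one factor has index outside its bottom range vanish likewise, by writing that factor as $(\,\cdot\,)e_1$ and moving $e_1$ across using $e_1f=0$ or $fe_1=0$. The only surviving case is $k+t=n-p-2$ with $kt\ne0$, where the product lies in $\mathcal{A}_{n-p}=\langle e_{n-p},f_{s_1+\dots+s_{n-p-1}+1},\dots,f_{s_1+\dots+s_{n-p}}\rangle$ and so has the displayed form with structure constants $b_{i,j}^{k,t}$, $b_{i,j,l}^{k,t}$ that remain undetermined at this stage. Everything else is zero: $e_jf_s=0$ by the above, and any product whose degree in the grading exceeds $n-p$ vanishes because $\mathcal{A}_m=0$ for $m>n-p$; this gives the ``other multiplications are zero'' clause.

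The main obstacle is organizational rather than conceptual: making the case split for the $ff$-products exhaustive, and keeping the index ranges ($s_{k+2}+1\le i\le s_{k+1}$, $s_{t+2}+1\le j\le s_{t+1}$, $1\le i\le s_{k+j+1}$, and so on) consistent as products are pushed up and down the grading by associativity, while checking that the basis normalizations used for the $f_\bullet e_j$ part and for the $f_\bullet f_\bullet$ part are mutually compatible. Once the preceding Lemma and the displayed identities above it are granted, the $e_ie_j$ and $f_\bullet e_j$ rows are routine, so the proof is essentially a bookkeeping exercise that gathers the $ff$-analysis into the stated table.
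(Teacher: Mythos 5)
Your proposal is correct and follows essentially the same route as the paper, which in fact states this theorem without a separate proof: it is a compilation of the adapted-basis relations for $e_ie_j$, of the preceding Lemma for the $f_\bullet e_j$ products, and of the displayed associativity identities just before the statement for the $f_\bullet f_\bullet$ products, exactly as you assemble them. The only step that is thin --- in the paper as much as in your write-up --- is the claim that right multiplication by $e_1$ kills the remaining $f$-components: the bottom range $s_{k+t+3}+1\le l\le s_{k+t+2}$ of $\mathcal{A}_{k+t+2}$ lies in the kernel of that map, so those coefficients are not detected this way (a cleaner fix is to write the first factor as $f_{s_1+\dots+s_{k-1}+i}e_1$ and use $e_1f_\bullet=0$), but this is inherited from the source rather than introduced by you.
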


\textbf{Acknowledgement.} We thank K.Abdurasulov for the helpful comments and suggestions that contributed to improving this paper.

\end{document}